\documentclass{article}
\usepackage{amsthm,amsmath,amssymb,xcolor,cleveref} 
\newtheorem{theorem}{Theorem}[section]
\newtheorem{conjecture}[theorem]{Conjecture}

\newtheorem{corollary}[theorem]{Corollary}
\newtheorem{lemma}[theorem]{Lemma}
\theoremstyle{definition}

\newcommand{\eq}[1]{\begin{align*}#1\end{align*}}
\newcommand{\eqn}[1]{\begin{align}#1\end{align}}
\newcommand{\eqon}[1]{\begin{align}\begin{split}#1\end{split}\end{align}}

\newcommand{\x}{\times}
\renewcommand{\-}{\setminus}

\newcommand{\R}{\mathbb R}

\newcommand{\ceil}[1]{\left\lceil#1\right\rceil}
\DeclareMathOperator{\dist}{dist^+}
\newcommand{\distprime}{\mathrm{dist}'^+}

\title{An improved bound on Seymour's second neighborhood conjecture}
\author{Hao Huang \thanks{Department of Mathematics, National University of Singapore. Email: huanghao@nus.edu.sg. Research supported in part by a start-up grant at NUS and an MOE Academic Research Fund (AcRF) Tier 1 grant.} \and Fei Peng \thanks{Department of Mathematics, National University of Singapore. Email: pfpf@u.nus.edu.}}
\date{}

\begin{document}

\maketitle
\begin{abstract}
Seymour's celebrated second neighborhood conjecture, now more than thirty years old, states that in every oriented digraph, there is a vertex $u$ such that the size of its second out-neighborhood $N^{++}(u)$ is at least as large as that of its first out-neighborhood $N^+(u)$. In this paper, we prove the existence of $u$ for which $|N^{++}(u)| \ge 0.715538 |N^+(u)|$. This result provides the first improvement to the best known constant factor in over two decades.
\end{abstract}
\section{Introduction}
An \textit{oriented digraph} is a directed graph without digons or loops; that is, there are no vertices $u,v$ for which the arcs $(u,v)$ and $(v,u)$ are both included. A \textit{tournament} is an edge-maximal oriented digraph; that is, between every two distinct vertices $u$ and $v$, exactly one of $(u,v)$ and $(v,u)$ is in the digraph. Given two vertices $u$ and $v$, the \textit{positive distance} from $u$ to $v$, denoted by $\dist(u,v)$, is the length of the shortest non-trivial directed walk starting from $u$ and ending at $v$. We set $\dist(u,v)=\infty$ if no such walk exists. Note that in an oriented digraph, $\dist(u,u)\ge3$. 
    
    Given a vertex $u$ in an oriented digraph $D$, define
    \eq{N^+(u)&=\{v\in V(D):\dist(u,v)=1\},\quad d^+(u)=|N^{+}(u)|;\\
    N^{++}(u)&=\{v\in V(D):\dist(u,v)=2\},\quad d^{++}(u)=|N^{++}(u)|;\\
    N^{+++}(u)&=\{v\in V(D):\dist(u,v)=3\},\quad d^{+++}(u)=|N^{+++}(u)|.}

    As $\dist(u,u)\ge3$, the definitions of $N^+(u)$, $N^{++}(u)$, $d^+(u)$ and $d^{++}(u)$ remain unchanged regardless of whether we use the usual distance or the positive distance. However, it is now possible that $u\in N^{+++}(u)$. We denote $N^+(u)$, $N^{++}(u)$ and $N^{+++}(u)$ as the \textit{first}, \textit{second}, and \textit{third neighborhood} of $u$, respectively. We also denote the vertices in $N^+(u)$ the \textit{out-neighbors} of $u$, and $d^+(u)$ as the \textit{out-degree} of $u$. For a fixed constant $\mu\in\R_{\ge0}$, we say that $u\in V(D)$ is a $\mu$-\textit{Seymour vertex} if $d^{++}(u)\ge \mu d^+(u)$.
    
In the early 1990s, Seymour posed the now-celebrated second neighborhood conjecture: in every digraph, there exists a vertex $u$ such that the size of its second out-neighborhood $N^{++}(u)$ is at least as large as that of its first out-neighborhood $N^+(u)$. Using the above notations, 
the second neighborhood conjecture can be restated as follows:
\begin{conjecture}[see~\cite{dean1995squaring}]\label{main_conj}
    Every oriented digraph has a $1$-Seymour vertex.
\end{conjecture}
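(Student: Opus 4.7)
The plan is a weighted averaging argument in the spirit of Fisher's proof of Dean's conjecture for tournaments, generalized to arbitrary oriented digraphs. Assuming for contradiction that $d^{++}(v)<d^{+}(v)$ for every $v\in V(D)$, I would seek a probability distribution $w\colon V(D)\to\R_{\ge0}$ with $\sum_v w(v)=1$ satisfying
\[
\sum_{v} w(v)\bigl(d^{++}(v)-d^{+}(v)\bigr)\ge 0,
\]
which together with the strict hypothesis yields $0>0$.

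The naive reduction to tournaments by orienting the missing arcs does not work: adding an arc $(v,y)$ where $y\in N^{++}(v)$ simultaneously removes $y$ from $N^{++}(v)$ and places it in $N^{+}(v)$, so a $1$-Seymour inequality on the extended tournament need not transfer back to $D$. The argument must therefore run internally. My next attempt would follow the median-order method of Havet-Thomass\'e: order $v_1,\dots,v_n$ to maximize the number of forward arcs and analyze the last vertex $v_n$, using local exchange properties to inject $N^{+}(v_n)$ into $N^{++}(v_n)$. In a tournament this succeeds because every pair $\{x,y\}$ carries an arc in some direction, supplying the crucial dichotomy; in a general oriented digraph this dichotomy fails and the injection may miss many out-neighbors. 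To compensate, I would couple the median order with a fractional LP-type weighting -- a monotone potential on ranks -- drawing slack from those out-neighbors of $v_n$ that happen to reach $N^{++}(v_n)$ through several internal paths, and hoping that the total slack dominates the deficit caused by non-edges.

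The main obstacle, and the reason \Cref{main_conj} has stood open for over thirty years, is precisely the non-edges: without the pair-dichotomy every known exact argument breaks and only produces a constant strictly below $1$. Closing the gap from the paper's $0.715538$ to $1$ appears to demand a genuinely new structural invariant, one that quantifies how the non-arcs around a vertex $v$ must be balanced against $N^{+++}(v)$ in terms of the positive distance $\dist$. I do not see how to construct such an invariant, so this proposal stops short of a complete proof; its purpose is to identify the treatment of missing pairs as the true bottleneck and to suggest that a median-order argument strengthened by LP weighting is the most plausible framework in which to attack it.
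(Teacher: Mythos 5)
You are right not to claim a proof here, and it is worth being explicit about why: the statement you were handed is \Cref{main_conj}, Seymour's second neighborhood conjecture itself, which the paper does \emph{not} prove. The paper's own contribution is \Cref{legit}, the existence of a $\gamma$-Seymour vertex with $\gamma=0.715538\dots$, and the authors state plainly that the full conjecture remains open for general oriented digraphs. So there is no ``paper's own proof'' to compare against; any submission purporting to settle \Cref{main_conj} would have to be treated as a major new result and scrutinized accordingly. Your proposal correctly stops short and identifies itself as a sketch of a research direction, which is the honest and appropriate response.

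Your diagnostic remarks are also sound and align with the paper's framing. The observation that adding an arc $(v,y)$ for $y\in N^{++}(v)$ shifts $y$ from the second to the first out-neighborhood, so a reduction to tournaments does not transfer the inequality, is exactly the standard reason the tournament case is special. Likewise, Fisher's Farkas-lemma argument and the Havet--Thomass\'e median-order argument both exploit the pair-dichotomy available only in tournaments, and the non-edges are indeed the obstruction. Where your sketch diverges from what the paper actually does is instructive: rather than a median-order or global LP weighting, the paper picks an out-degree minimizer $u$, then a \emph{weighted} out-degree minimizer $v$ inside $N^+(u)$, partitions vertices by their positive distances from both $u$ and $v$, controls the third neighborhood via edge-minimality (\Cref{3ncoro}), and reduces everything to the infeasibility of a concrete quadratic CSP. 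That machinery is built to squeeze out a constant strictly below $1$, and the authors acknowledge a structural barrier to pushing it to $1$ by these means. If you want to pursue the LP-plus-median-order idea further, the most useful next step would be to formalize what extra inequality you would extract from non-adjacent pairs and check whether it can even in principle close the gap, rather than only defeat the counterexamples the current CSP fails on.
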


 A notable special case of Conjecture \ref{main_conj} asks if it holds for tournaments. This was known as \textit{Dean's conjecture}, and was first verified by Fisher~\cite{fisher1996squaring} based on Farkas' Lemma. Later, Havet and Thomassé~\cite{havet2000median} gave a very different proof using median orders. The second neighborhood conjecture has attracted a lot of attention over the years \cite{ai2024seymour, brantner2009contributions, cohn2016number, dara2022extending, espuny2024seymour, fidler2007remarks, ghazal2012seymour, hernandez2012k, kaneko2001minimum, liang2017seymour, lim2020generalisation, llado2013second, mezher2024note, subset, sullivan2006summary}. Despite various results on special cases and attempts to tackle the original form, Conjecture \ref{main_conj} remains open for general oriented digraphs. If the conjecture is true, the constant $1$ is best possible, as demonstrated by the sparse powers, disjoint unions, and iterative blow-ups of directed cycles. One viable target is to find in a general oriented digraph a $\mu$-Seymour vertex, for some $\mu <1$. It is not hard to show that a vertex with minimum out-degree is always a $\frac12$-Seymour vertex.  With some careful analysis, Chen, Shen and Yuster~\cite{csy} proved that the constant factor $\frac{1}{2}$ can be improved.
\begin{theorem}[\cite{csy}, Theorem~6]\label{thm_csy}
    Every oriented digraph has a $\lambda$-Seymour vertex, where $\lambda=0.657298\dots$ is the unique real root of $2x^3+x^2-1=0$.
\end{theorem}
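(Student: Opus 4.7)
The plan is to argue by contradiction: assume no vertex is a $\lambda$-Seymour vertex, so $d^{++}(x) < \lambda d^+(x)$ for every $x$. Take $u$ to be a vertex of minimum out-degree $d = d^+(u)$. This is the natural starting point, as the easy $\tfrac12$-bound already arises from such a vertex by a single double-counting step, and we need to squeeze more out of the same setting.

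First I would sharpen that double-count in $N^+(u)$. Since the digraph is oriented, no arc returns from $N^+(u)$ to $u$, so every out-arc from $N^+(u)$ lands in $N^+(u) \cup N^{++}(u)$. Each $v \in N^+(u)$ satisfies $d^+(v) \ge d$ by the minimality of $u$, and each vertex of $N^{++}(u)$ has at most $d$ in-neighbors in $N^+(u)$. Hence the number of arcs inside $N^+(u)$ is at least $d^2 - d\, d^{++}(u) > (1-\lambda)d^2$. Averaging yields some $v \in N^+(u)$ with $|N^+(v) \cap N^+(u)| > (1-\lambda)d$.

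Next I would apply the non-Seymour assumption at $v$, giving $d^{++}(v) < \lambda d^+(v)$, and repeat the edge-count inside $N^+(v)$. The key point is that $N^+(v)$ overlaps $N^+(u)$ in more than $(1-\lambda)d$ vertices, and out-arcs from $N^+(v) \cap N^+(u)$ are still constrained to $N^+(u) \cup N^{++}(u)$, so the overlap produces extra structure: many out-neighbors of $N^+(v) \cap N^+(u)$ must hit either $N^+(u) \setminus N^+(v)$ or new elements of $N^{++}(v)$ inside $N^{++}(u)$. Combining this second inequality with the first and optimizing over the parameters $d^{++}(u)/d$ and $|N^+(v) \cap N^+(u)|/d^+(v)$, I aim to force $2\lambda^3 + \lambda^2 \le 1$.

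The main obstacle is arranging the second count so that a genuine cubic, rather than a weaker quadratic, emerges. This will require using simultaneously the oriented-graph restriction inside $N^+(v)$, the minimum-degree condition $d^+(v) \ge d$, and the full inclusion pattern among $N^+(u), N^{++}(u), N^+(v), N^{++}(v)$, while carefully controlling overcounting of vertices in the intersection $N^{++}(v) \cap N^+(u)$. Only when all three ingredients are used at full strength, rather than as averages, do the combined inequalities collapse to $2\lambda^3 + \lambda^2 - 1 \le 0$, whose tight value is exactly $\lambda = 0.657298\ldots$.
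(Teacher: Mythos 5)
This statement is Theorem~6 of Chen, Shen and Yuster~\cite{csy}; the present paper only cites it and does not re-prove it, so there is no in-paper proof to check you against. What the paper does reveal, in the discussion preceding Lemma~\ref{feas}, is the shape of the \cite{csy} argument: one takes $u$ of minimum out-degree, and then obtains the second vertex $v$ by \emph{invoking the theorem inductively (equivalently, by edge-minimality of a putative counterexample) on the induced subgraph $D[N^+(u)]$}, so that $v$ is a $\lambda$-Seymour vertex \emph{inside} $D[N^+(u)]$. That choice is the engine of the proof, because it hands you a lower bound of the form $|N^{++}(v)\cap N^+(u)| \ge \lambda\,|N^+(v)\cap N^+(u)|$, i.e.\ genuine second-neighborhood information about $v$ localized inside $N^+(u)$; combined with $d^+(v)\ge d^+(u)$, the failure $d^{++}(v)<\lambda d^+(v)$, and $N^+(v)\subset N^+(u)\cup N^{++}(u)$, this is what collapses to the cubic $2\lambda^3+\lambda^2-1=0$.

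Your proposal departs from this at the decisive step. You choose $v$ merely by averaging, as a vertex of large out-degree inside $N^+(u)$. Such a $v$ carries no information about its own second neighborhood, and the subsequent count of out-arcs from $N^+(v)$ degenerates: writing $a=|N^+(v)\cap N^+(u)|$ and $b=|N^+(v)\cap N^{++}(u)|$, that count bounds $(a+b)d^+(u) < \tfrac{(a+b)^2}{2} + \lambda(a+b)^2$, which depends only on $a+b=d^+(v)$; the averaging fact $a>(1-\lambda)d^+(u)$ never enters, and the available bounds $d^+(u)\le a+b< (1+\lambda)d^+(u)$ give nothing near $\lambda\approx 0.657$. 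You flag exactly this gap yourself (``arranging the second count so that a genuine cubic \dots emerges''), but that is not a deferred optimization, it is the missing idea. The fix is to replace the averaging choice of $v$ by the inductive/minimality choice, which pins down where $N^{++}(v)$ sits relative to $N^+(u)$ and $N^{++}(u)$; this is what \cite{csy} does, and what the present paper strengthens in Lemma~\ref{feas} by selecting $v$ as a weighted out-degree minimizer rather than via the induction hypothesis.
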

At the end of their paper, they also claim that a better constant $\lambda'=0.67815\dots$ can be achieved with similar methods. The main goal of this paper is to improve Theorem \ref{thm_csy}.

\begin{theorem}\label{legit}
    Every oriented digraph has a $\gamma$-Seymour vertex, where $\gamma=0.715538\dots$ is the unique real root of the equation $8x^5+4x^4-12x^3-7x^2+2x+4=0$ in $[0, 1]$.
\end{theorem}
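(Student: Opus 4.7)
The plan is to follow the Chen-Shen-Yuster framework — argue by contradiction and compare degree sums across layers of positive-distance neighborhoods — but refine it by going one layer deeper. Suppose no vertex is a $\gamma$-Seymour vertex, so that every $v \in V(D)$ satisfies $d^{++}(v) < \gamma d^+(v)$. I would begin by selecting an extremal vertex $u$, most likely the minimizer of a weighted combination such as $d^+(v) - \gamma d^{++}(v)$ (or possibly a variant that also penalizes $d^{+++}$), so that extremality yields useful lower bounds on the out-degrees of the vertices $w \in N^+(u)$. Set $N = N^+(u)$, $M = N^{++}(u)$, and $P = N^{+++}(u)$; the oriented property forces all arcs between $\{u\}$ and $N$, between $N$ and $M$, and between $M$ and $P$ to point in the forward direction, while arcs within each layer and arcs from $M$ back into $N$ remain unrestricted in direction.

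The core of the argument is a careful double-counting of arcs incident to these layers. For each $v \in N$, one has $N^+(v) \subseteq N \cup M$ and $N^{++}(v) \subseteq \{u\} \cup N \cup M \cup P$, so summing the hypothesis $d^{++}(v) < \gamma d^+(v)$ over $v \in N$ converts into a linear inequality among the arc-counts across these layers. Combining this with the global bound $|M| < \gamma |N|$ applied at $u$ itself, the oriented caps $\binom{|N|}{2}$ and $\binom{|M|}{2}$ on arcs inside $N$ and $M$, the analogous cap on the $M$-to-$P$ arcs, and the lower bound on $d^+(v)$ inherited from the extremality of $u$, one obtains a system of polynomial inequalities in $\gamma$, $|N|$, $|M|$, $|P|$, and the intermediate arc counts. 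Eliminating the auxiliary parameters should collapse everything to a single scalar condition whose critical threshold is $8\gamma^5 + 4\gamma^4 - 12\gamma^3 - 7\gamma^2 + 2\gamma + 4 = 0$, producing the desired contradiction at any $\gamma$ at most the real root in $[0,1]$.

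The principal obstacle, I expect, is choosing the extremal vertex and the weighting scheme so that several of the counting inequalities are \emph{simultaneously} tight — any slack translates directly into a weaker final constant, and recovering a degree-5 threshold from essentially linear arguments requires very little waste. A secondary difficulty is the careful bookkeeping of arcs involving $u$ itself inside $N^{++}(v)$ for $v \in N$, and of arcs from $M$ back into $N$, both of which are permitted in an oriented digraph and each of which appears with distinct coefficients in the various identities. The jump from the degree-$3$ polynomial $2x^3 + x^2 - 1$ of Theorem~\ref{thm_csy} to a degree-$5$ polynomial strongly suggests that the third neighborhood layer $P$ is genuinely needed in the analysis, perhaps combined with applying the Seymour-violation hypothesis at two different vertices simultaneously (both $u$ and a carefully chosen representative of $N^+(u)$). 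Once the polynomial condition is established, verifying that $\gamma = 0.715538\dots$ is the unique root in $[0,1]$ is a routine sign analysis at the endpoints together with a monotonicity check on a suitable subinterval.
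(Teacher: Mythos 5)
The proposal is a high-level sketch that lands in the right neighborhood — contradiction, double-counting arcs, considering $N^{+++}$, and applying the no-Seymour hypothesis at two vertices — but it is missing two ideas the paper's proof actually hinges on, and without them the arithmetic would not close.

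First, you have no usable bound on $|P| = d^{+++}(u)$. You include $P$ in the layer structure, but the hypothesis ``$D$ has no $\gamma$-Seymour vertex'' by itself says nothing about third neighborhoods. The paper gets this from an extra lemma: in an \emph{edge-minimal} counterexample, $d^{+++}(u) \le \mu\, d^{++}(u) < \mu^2 d^+(u)$ for every $u$. Establishing this is a separate, non-trivial argument — one takes a maximal subset $T \subseteq N^{++}(u)$ with $\mu|T| \ge |N^+(T)\setminus N^+(u)|$, deletes arcs from $N^+(u)$ into $N^{++}(u)\setminus T$, and uses edge-minimality to extract a Seymour vertex of the smaller digraph whose existence then contradicts maximality of $T$. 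Your plan never introduces edge-minimality and never establishes a constraint tying $|P|$ to $|N|$ or $|M|$; simply writing arcs into $P$ into the ledger does not produce the $\mu^2$-coefficient constraint that the final polynomial needs.

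Second, the choice of extremal vertices in your sketch is not the one that works. You propose taking $u$ to minimize something like $d^+(v) - \gamma d^{++}(v)$. The paper instead takes $u$ to be a plain global out-degree minimizer, and then — this is the key new ingredient — picks a \emph{second} vertex $v \in N^+(u)$ minimizing a weighted out-degree $w\, d^+_{N^+(u)}(v) + d^+_{N^{++}(u)}(v)$, where the weight $w > 1$ is a free parameter optimized at the very end (to $\gamma^2 + 2\gamma^3$). The double counting is then carried out not on the coarse layers $N, M, P$ but on the refined cells $X_{ij} = \{y : \dist(u,y)=i,\ \dist(v,y)=j\}$, and the oriented cap $\binom{|Y|}{2}$ is applied to a cleverly chosen union $Y = X_{11}\cup X_{12}\cup X_{21}\cup X_{22}$ rather than to $N$ or $M$ separately. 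Both the refined decomposition and the free weight $w$ are what let several inequalities be simultaneously tight, which is exactly the concern you flag but do not resolve. As written, your sketch would reproduce something closer to the Chen–Shen–Yuster cubic bound; it does not contain the machinery that pushes the threshold to the quintic.
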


Our improvement comes from considering the third neighborhoods and introducing a new notion of a weighted out-degree minimizer, which we will elaborate on later. The remainder of this paper is structured as follows. In Section \ref{sec_reduction}, we show that the absence of $\gamma$-Seymour vertices implies the satisfiability of a certain quadratic constraint satisfaction problem (CSP). Section \ref{sec_opt} establishes that this CSP is indeed unsatisfiable, and thus Theorem \ref{legit} follows immediately. The final section contains some concluding remarks and further discussions.

\section{Reduction to a CSP}\label{sec_reduction}
\subsection{Small third neighborhoods}
Throughout this paper, a digraph $D$ is said to be a $\mu$-\textit{counterexample} if it has no $\mu$-Seymour vertices. We call it an \textit{edge-minimal} $\mu$-\textit{counterexample} if in addition none of its proper subgraphs is a $\mu$-counterexample. Our goal is to prove that $\gamma$-counterexamples cannot exist. We first show that in an edge-minimal $\mu$-counterexample, not only are the second neighborhoods small compared to the first neighborhoods, but the third neighborhoods are also small compared to the second neighborhoods.
\begin{lemma}\label{3ncoro}
    If $D$ is an edge-minimal $\mu$-counterexample, then for all $u\in V(D)$, $d^{+++}(u)\le\mu d^{++}(u)<\mu^2d^+(u)$.
\end{lemma}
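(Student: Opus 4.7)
The second inequality $\mu d^{++}(u) < \mu^2 d^+(u)$ is immediate: since $D$ is a $\mu$-counterexample, the vertex $u$ is not a $\mu$-Seymour vertex, so $d^{++}(u) < \mu d^+(u)$, and multiplying both sides by $\mu$ yields the claim.

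For the first inequality $d^{+++}(u) \le \mu d^{++}(u)$, my plan is to encode $N^{++}(u)$ and $N^{+++}(u)$ as the first and second out-neighborhoods of a single auxiliary vertex. Concretely, I would form the oriented digraph $D^*$ from $D$ by adjoining a fresh vertex $u^*$ whose out-edges go exactly to the vertices of $N^{++}_D(u)$, and which has no in-edges. Since $u^*$ is unreachable from every $v \in V(D)$, each such $v$ retains its first and second out-neighborhoods in $D^*$; in particular, no $v \in V(D)$ is a $\mu$-Seymour vertex of $D^*$. A direct count gives $d^+_{D^*}(u^*) = d^{++}_D(u)$, and for any $w \in N^{+++}_D(u)$, choosing a witnessing walk $u \to a \to b \to w$ in $D$ one verifies that $b \in N^{++}_D(u)$ (because $b \in N^+(a)$ forces $\dist(u,b)\le 2$, while $b \in N^+(u) \cup \{u\}$ would contradict $\dist(u,w) = 3$). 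Hence $u^* \to b \to w$ is a length-$2$ walk in $D^*$ with $w \notin N^+_{D^*}(u^*) = N^{++}_D(u)$, so $N^{++}_{D^*}(u^*) \supseteq N^{+++}_D(u)$.

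The crux is then to show that $u^*$ is not itself a $\mu$-Seymour vertex of $D^*$, which would give $d^{+++}_D(u) \le d^{++}_{D^*}(u^*) < \mu d^+_{D^*}(u^*) = \mu d^{++}_D(u)$. Because $D^*$ is not a subgraph of $D$, the edge-minimality of $D$ does not apply to $D^*$ directly; my plan is to argue by contradiction, assuming $u^*$ is $\mu$-Seymour in $D^*$ and producing a proper subgraph of $D$ that still is a $\mu$-counterexample---for instance by deleting a carefully chosen out-edge $(u, a)$ whose removal does not destroy any length-$2$ walk witnessing $u^*$'s second out-neighborhood---thereby contradicting edge-minimality. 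I expect the main technical obstacle to lie precisely here: selecting the right edge to delete, and verifying that no vertex of the resulting truncated digraph becomes $\mu$-Seymour. The trivial case $d^{++}(u) = 0$ can be handled separately, as it forces $N^{+++}(u) = \emptyset$ directly.
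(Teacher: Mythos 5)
Your reduction of the first inequality to ``$u^*$ is not $\mu$-Seymour in $D^*$'' is set up correctly: adjoining a source $u^*$ pointing precisely to $N^{++}_D(u)$ does give $d^+_{D^*}(u^*) = d^{++}_D(u)$ and $N^{++}_{D^*}(u^*) \supseteq N^{+++}_D(u)$, and you verify the inclusion carefully. But the step you flag as the ``crux''---showing $u^*$ is not $\mu$-Seymour in $D^*$---is precisely where the proof is, and your proposed route to it does not work as outlined. You want to delete an out-edge of $u$ and then verify that the truncated digraph is still a $\mu$-counterexample, thereby contradicting edge-minimality. But verifying that a proper subgraph has \emph{no} $\mu$-Seymour vertex requires controlling $d^{++}$ versus $d^+$ for \emph{every} vertex of the smaller digraph simultaneously; this is a global condition, essentially as hard as the original problem, and there is no reason a single well-chosen edge deletion should preserve it. Edge-minimality is useful in the opposite direction: a proper subgraph must \emph{produce} a $\mu$-Seymour vertex, and one extracts structural information from that vertex.

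That is exactly what the paper does, and it is a genuinely different mechanism. Rather than your $u^*$, the paper takes a maximal $T \subseteq N^{++}(u)$ with $\mu|T| \ge |N^+(T)\setminus N^+(u)|$ and, assuming $T \ne N^{++}(u)$, deletes the arcs from $N^+(u)$ into $T' = N^{++}(u)\setminus T$. Minimality then \emph{guarantees} a $\mu$-Seymour vertex $v$ in the smaller digraph; analyzing why $v$ became $\mu$-Seymour (it must lie in $N^+(u)$, and the freshly-unreachable set $A\cup B$ is small relative to the gain in its second neighborhood) lets one augment $T$ to $T_2 = T\cup A\cup B$ still satisfying the defining inequality, contradicting maximality. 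So the paper never needs a ``no new Seymour vertex'' verification; it uses the forced Seymour vertex constructively. Your proposal as written has a real gap at the crux and would need to be reorganized along these lines (or some other mechanism that extracts information from the Seymour vertex that minimality guarantees) before it could close.
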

\Cref{3ncoro} is a special case of Lemma 4 of \cite{subset} letting $S=N^+(u)$. For completeness, we include a proof with a slight adjustment to the definition of a key term $N^+(A)$ for technical clarity, while following their proof idea.
\begin{proof}
    The strict inequality is obvious, as otherwise $u$ would be a $\mu$-Seymour vertex. Thus, it suffices to show the first inequality.
    
    For a set of vertices $A\subset V(D)$, let $N^+(A)$ be their out-neighbors outside~$A$: 
    \eq{N^+(A)=\{v\in V(D)\-A:\exists u\in A\text{ s.t. }(u,v)\in D\}.} Note that $N^+(N^+(u))=N^{++}(u)$ and $N^+(N^{++}(u))\-N^+(u)=N^{+++}(u)$. Let $T$ be a maximal subset of $N^{++}(u)$ such that $\mu|T|\ge|N^+(T)\-N^+(u)|$. Note that this condition is always satisfied by the empty set. If $T=N^{++}(u)$, we have $\mu d^{++}(u) \ge d^{+++}(u)$, implying the first inequality. Otherwise, let $T'=N^{++}(u)\-T\neq\emptyset$.

    We create a new digraph $D'$ from $D$ by removing the arcs from $N^+(u)$ to $T'$. Define the notions $\distprime(v_1,v_2)$, $N'^+(v)$, $N'^{++}(v)$, $d'^+(v)$ and $d'^{++}(v)$ in the same way as in the introduction, but using $D'$ as the underlying digraph. Since in $D$, every vertex in $T'$ is reachable from $N^+(u)$, $D'$ has strictly fewer edges than~$D$, so it cannot be a $\mu$-counterexample. Thus, it admits a $\mu$-Seymour vertex $v$. Since~$v$ becomes $\mu$-Seymour  only after the edge removal, we have \eqn{\begin{aligned}N'^+(v)&\subsetneq N^+(v),\\d^{++}(v)-d'^{++}(v)&<\mu(d^+(v)-d'^+(v)).\end{aligned}\tag{*}\label{seesaw}} 
    
    The first line of \eqref{seesaw} implies that $v\in N^+(u)$. Define \eq{A&=\{y\in N^+(v): \distprime(v,y)\ge3\}\subset T',\\B&=\{y\in N^+(v): \distprime(v,y)=2\}\subset T',\\C&=N^+(A\cup B)\-(N^+(u)\cup T\cup N^+(T)).} Note that $A\cup B=N^+(v)\-N'^+(v)$, which is nonempty by the first line of \eqref{seesaw}. Also, $C\subset N^{++}(v)\-N'^{++}(v)$ and $B=N'^{++}(v)\-N^{++}(v)$. Thus, by the second line of \eqref{seesaw}, \eq{|C|-|B|\le |N^{++}(v)|-|N'^{++}(v)|<\mu(|N^+(v)|-|N'^+(v)|)=\mu(|A|+|B|).}

    Set $T_2=T\cup A\cup B$. We claim that \eq{(N^+(T_2)\-N^+(u))\cup B\subset (N^+(T)\-N^+(u))\cup C.} For all $y\in N^+(T_2)\-N^+(u)$, because it is in $N^+(T_2)$, it is either in $N^+(T)$ or in $N^+(A\cup B)\- (N^+(T)\cup T_2)$. In the first case, $y\in N^+(T)\-N^+(u)$; in the second case, $y\in C$. For all $y\in B$, note that $y\in N'^{++}(v)$ but $y\in T'$, implying that $y\in N^+(T)$ because in $D'$, $v$ cannot reach $y$ in two steps without going through~$T$. Since $T'$ is disjoint from $N^+(u)$, we have that $y\in N^+(T)\-N^+(u)$.

    Since $B\subset T_2$ is disjoint from $N^+(T_2)\-N^+(u)$, we have \eq{|N^+(T_2)\-N^+(u)|&\le |N^+(T)\-N^+(u)|+|C|-|B|\\
    &< \mu|T|+\mu(|A|+|B|)\\
    &=\mu|T_2|.} 
    Since $A\cup B\neq\emptyset$, this contradicts with the maximality of $T$. Hence, it must be that the maximal $T$ is $N^{++}(u)$, from which we get the desired inequalities.
\end{proof}
\subsection{CSP from a $\mu$-counterexample}
We show that for all $\mu$, the existence of a $\mu$-counterexample implies the satisfiability of a specific CSP. In the next section, we will see that this CSP is unsatisfiable when $\mu$ is set to the constant $\gamma$ from Theorem \ref{legit}. The proof presented in  \cite{csy} can also be interpreted in this way. The CSP we discuss below is stronger, so the phase transition occurs at a larger $\mu$. 
The key factor contributing to this improvement is that after choosing the first vertex $u$ to be the out-degree minimizer, instead of invoking the induction hypothesis in $N^+(u)$ to obtain the second vertex $v$, we choose our second vertex to be a ``weighted out-degree minimizer'' in $N^+(u)$. That is, we choose in $N^+(u)$ a vertex that minimizes the weighted out-degree, giving a higher weight to the out-neighbors in $N^+(u)$. This modification, together with \Cref{3ncoro}, provides the desired strengthening of the CSP. The value of the weighting factor $w$ will not be chosen until \Cref{sec_opt}, so our next result works for all $w \ge 1$.

\begin{lemma}\label{feas}
    If for some $\mu\ge0$ there exists a $\mu$-counterexample, then for all constants $w\ge1$, the following CSP is satisfiable:
    \eqn{x_{11},x_{12},x_{13},x_{14},x_{21},x_{22},x_{23},x_{24},x_{32},x_{33},x_{34}\in \R\nonumber\\\text{subject to}\nonumber\\\mu(x_{11}+x_{12}+x_{13}+x_{14})> x_{21}+x_{22}+x_{23}+x_{24},\label{i1}\\
    \mu^2(x_{11}+x_{12}+x_{13}+x_{14}) > x_{32}+x_{33}+x_{34},\label{i2}\\
    \mu(x_{11}+x_{21}) > x_{12}+x_{22}+x_{32},\label{i3}\\
    x_{21}\ge x_{12}+x_{13}+x_{14},\label{i4}\\
    x_{14},x_{24},x_{34},x_{23}\ge 0\label{xi4},\\x_{11}>0,\quad x_{13},x_{22},x_{32},x_{33},x_{12}+x_{13},x_{12}+x_{22}\ge 0,\label{importantlinear}\\
    F> 0,\label{i5}
    }where 
    \eq{\begin{aligned}F&=x_{21}(x_{32}-x_{11}-x_{13})+x_{22}(x_{32}+x_{23}+x_{33})+x_{23}x_{12}-x_{14}(x_{12}+x_{21}+x_{22})\\
        &~~~+\frac{(x_{21}+x_{22})^2}2-\frac{wx_{11}^2+x_{12}^2}2+(w-1)x_{11}x_{12}. \end{aligned}}
\end{lemma}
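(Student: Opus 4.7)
The plan is to extract eleven real numbers from an edge-minimal $\mu$-counterexample $D$ and verify that they satisfy the listed CSP. Passing to an edge-minimal subgraph if necessary, one may assume \Cref{3ncoro} is available. As the two witness vertices, I take $u$ to be any vertex of minimum out-degree in $D$, and $v \in N^+(u)$ to be a minimizer of the weighted out-degree
\[
  \operatorname{wod}(v') \;:=\; w\,|N^+(v') \cap N^+(u)| + |N^+(v') \cap N^{++}(u)|.
\]
Given $u$ and $v$, I partition $N^+(u) \cup N^{++}(u) \cup N^{+++}(u)$ into classes indexed by pairs $(i,j)$: the row $i \in \{1,2,3\}$ records whether a vertex lies in $N^+(u)$, $N^{++}(u)$, or $N^{+++}(u)$, while the column $j \in \{1,2,3,4\}$ records its position with respect to $v$ (in $N^+(v)$, in $N^{++}(v)$, in $N^{+++}(v)$, or unreachable from $v$). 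The $(3,1)$-class is empty because $v \in N^+(u)$ forces $N^+(v) \subseteq N^+(u) \cup N^{++}(u)$, which is why $x_{31}$ never appears. Let $x_{ij}$ be the cardinality of the $(i,j)$-class.

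The linear constraints are near-immediate translations of standard facts: \eqref{i1} is the non-Seymour condition at $u$; \eqref{i2} is \Cref{3ncoro} applied at $u$; \eqref{i3} is the non-Seymour condition at $v$, using $d^+(v) = x_{11} + x_{21}$ and $d^{++}(v) = x_{12} + x_{22} + x_{32}$; \eqref{i4} is a rewriting of the out-degree minimality $d^+(v) \ge d^+(u)$; and the nonnegativity constraints in \eqref{xi4}--\eqref{importantlinear} come from cardinalities being nonnegative together with small structural identities exploiting digon-freeness and $\dist(u,u) \ge 3$. The strict inequality $x_{11} > 0$ may require a short separate argument ruling out the degenerate case $N^+(v) \cap N^+(u) = \emptyset$, in which case an easier analysis at $v$ alone already yields a contradiction.

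The substantive content is \eqref{i5}: $F > 0$. The intended mechanism is a global averaging argument driven by the weighted minimality of $v$. Concretely, I would pick a reference set $S \subseteq N^+(u)$ and use $|S| \cdot \operatorname{wod}(v) \le \sum_{v' \in S} \operatorname{wod}(v')$; the right-hand side is a $w$-weighted count of arcs from $S$ into $N^+(u) \cup N^{++}(u)$, which breaks up into cross-products $x_{ij}x_{k\ell}$ (for arcs between distinct classes) and within-class contributions bounded by $\binom{x_{ij}}{2} \le x_{ij}^2/2$ (using that $D$ is oriented). Distance restrictions and the absence of digons kill many would-be arcs, determining the sign pattern of $F$: the negative diagonal terms $-(w x_{11}^2 + x_{12}^2)/2$ are tautological within-class contributions to subtract off, $\tfrac12(x_{21}+x_{22})^2$ bounds arcs into the $N^{++}(u)$-side of $v$'s immediate neighborhood, and the cross-term $(w-1)x_{11}x_{12}$ is the fingerprint of the asymmetric weighting on class $(1,1)$ versus $(1,2)$. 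Rearranging, one should recover precisely $F > 0$, with strictness inherited from the strict non-Seymour inequality at $v$.

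The main obstacle is the bookkeeping in the preceding step: identifying the correct reference set $S$ (the $(w-1)x_{11}x_{12}$ term forces a specific asymmetric choice that breaks the symmetry between columns $1$ and $2$), and then carefully enumerating, class by class, which arcs can or cannot occur under the distance constraints imposed jointly by $u$, $v$, and the oriented structure. Once these two ingredients are pinned down, the final inequality is a mechanical algebraic expansion landing on the asserted formula for $F$.
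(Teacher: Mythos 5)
Your setup exactly matches the paper: the same choice of $u$ (global out-degree minimizer) and $v$ (weighted out-degree minimizer in $N^+(u)$), the same partition into classes $X_{ij}$ by joint positive distance from $u$ and $v$, and the same translations of the linear constraints, including the rewriting $d^+(v)\ge d^+(u)\Leftrightarrow x_{21}\ge x_{12}+x_{13}+x_{14}$. Your observation that $x_{11}>0$ needs a short argument is also correct (if $x_{11}=0$ then $N^+(v)\subseteq N^{++}(u)$ and $u$ becomes $1$-Seymour).

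There is, however, a real gap in your mechanism for the quadratic inequality $F>0$. You propose averaging the weighted minimality $\operatorname{wod}(v')\ge\operatorname{wod}(v)$ over a reference set $S\subseteq N^+(u)$. Since $S\subseteq N^+(u)$, the resulting inequality can only involve out-arcs emanating from vertices of $N^+(u)$; in the $x_{ij}$ language this produces products of the form $x_{1j}\cdot x_{k\ell}$. But $F$ contains terms such as $x_{21}x_{32}$, $x_{22}(x_{32}+x_{23}+x_{33})$ and $\tfrac12(x_{21}+x_{22})^2$, which are counts of arcs \emph{out of} $X_{21}\cup X_{22}\subseteq N^{++}(u)$, not out of $N^+(u)$. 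No averaging restricted to $S\subseteq N^+(u)$ can produce these. What the paper actually does is double-count $e(Y,V(D))$ for the larger set $Y=X_{11}\cup X_{12}\cup X_{21}\cup X_{22}$, combining two distinct lower bounds: the weighted minimality of $v$ is applied \emph{only} to vertices of $X_{11}$ (giving $d^+(y)\ge wx_{11}+x_{21}-(w-1)d^+_{X_1}(y)$), while the global out-degree minimality of $u$ is applied to vertices of $X_{12}\cup X_{21}\cup X_{22}$ (giving $d^+(y)\ge d^+(u)=|X_1|$). The upper bound is $e(Y,Y)<|Y|^2/2$ plus class-by-class bounds on $e(Y,V(D)\setminus Y)$ using the distance restrictions. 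Your plan captures the role of weighted minimality and the asymmetric $(w-1)x_{11}x_{12}$ cross-term, but it omits the second ingredient (global minimality applied to $X_{12}\cup X_{21}\cup X_{22}$) and the choice of $Y$ outside $N^+(u)$; without these the count cannot close to $F>0$.
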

\begin{proof}
    Take an edge-minimal $\mu$-counterexample,~$D$. Using the constant~$w$ as a weighting factor, we will select two vertices from $D$ in the following way: first let $u\in V(D)$ be a vertex with minimum out-degree: i.e., $d^+(y)\ge d^+(u)$ for all $y\in V(D)$. Then, let $X_1=N^+(u)$ and $X_2=N^{++}(u)$. Note that $X_1\neq\emptyset$ as otherwise $d^+(u)=0$, making $u$ a $\mu$-Seymour vertex. Among those vertices in $X_1$, let $v$ be a vertex that minimizes the quantity $wd^+_{X_1}(v)+d^+_{X_2}(v)$, where $d_A^+(v)=|N^+(v)\cap A|$ denotes the number of out-neighbors of $v$ in $A$. We remark that $d^+_{X_1}(v)+d^+_{X_2}(v)=d^+(v)$ because $N^+(v)\subset X_1\cup X_2$, and that $v$ is the weighted out-degree minimizer we mentioned at the beginning of this subsection. 
    For $i,j\in\{1,2,3\}$, define the following pair-wise disjoint subsets of vertices based on their positive distances from $u$ and $v$:    
    \eq{X_{ij}=\{y\in V(D):\dist(u,y)=i\text{ and }\dist(v,y)=j\}.} 
    For $i\in\{1,2,3\}$, we also set 
    $$X_{i4}={\{y\in V(D):\dist(u,y)=i\text{ and }\dist(v,y)\ge4\}}.$$ 
    Hence, the sets $\{X_{ij}\}_{i\in\{1,2,3\},j\in\{1,2,3,4\}}$ partition $N^+(u)\cup N^{++}(u)\cup N^{+++}(u)$. Note that since $(u,v)\in D$, $X_{31}=\emptyset$. For every other~$X_{ij}$ we defined, let $x_{ij}=|X_{ij}|\in\R$. We claim that this assignment solves the CSP. 

    Note that \eq{d^+(u)&=|X_1|=x_{11}+x_{12}+x_{13}+x_{14},\\d^{++}(u)&=|X_2|=x_{21}+x_{22}+x_{23}+x_{24},\\d^{+++}(u)&=x_{32}+x_{33}+x_{34},\\d^+(v)&=x_{11}+x_{21},\\d^{++}(v)&=x_{12}+x_{22}+x_{32}.}
    Thus, the constraints \eqref{i1} and \eqref{i2} follow from \Cref{3ncoro}, and \eqref{i3} follows from the fact that $v$ is not a $\mu$-Seymour vertex. Since $u$ minimizes the out-degree, we have $d^+(v)\ge d^+(u)$, implying the constraint \eqref{i4}. 
    Observe that $x_{11}$ is strictly positive because otherwise, $N^+(v)\subset N^{++}(u)$ and $u$ would be a $1$-Seymour vertex: $d^{++}(u)\ge d^+(v)\ge d^+(u)$. The rest of the constraints \eqref{xi4} and \eqref{importantlinear} follow from the nonnegativity of the $x_{ij}$ variables. 
    Thus, it remains to show that the constraint \eqref{i5} holds.

    For $A,B\subset V(D)$, define $e(A,B)=|\{(a,b)\in D:a\in A,\ b\in B\}|$. Let $Y=X_{11}\cup X_{12}\cup X_{21}\cup X_{22}.$ We will count, in two ways, the quantity $e(Y, V(D))$. 
    
        We first give a lower bound on $e(Y,V(D))$.
        Note that for all $y\in V(D)$, $d^+(y)\ge d^+(u)$. Applying this bound to each vertex in $X_{12}\cup X_{21}\cup X_{22}$, we have \eq{e(X_{12}\cup X_{21}\cup X_{22}, V(D))\ge (x_{12}+x_{21}+x_{22})|X_1|.} For each vertex $y\in X_{11}$, we instead use the fact that $y$ failed to beat~$v$ when we chose the weighted out-degree minimizer inside $X_1$. That is, \eq{wd^+_{X_1}(y)+d^+_{X_2}(y)\ge wd^+_{X_1}(v)+d^+_{X_2}(v)=wx_{11}+x_{21}.} It follows that $d^+(y)\ge wx_{11}+x_{21}-(w-1)d^+_{X_1}(y).$ Summing over all $y\in X_{11}$, we have \eq{e(X_{11},V(D))&\ge (wx_{11}+x_{21})x_{11}-(w-1)e(X_{11},X_1)\\&=(wx_{11}+x_{21})x_{11}-(w-1)e(X_{11},X_{11}\cup X_{12})\\&\ge (wx_{11}+x_{21})x_{11}-(w-1)(x_{11}^2/2+x_{11}x_{12}),}
        where the last line follows from the fact that $e(X_{11},X_{11})\le{|X_{11}|\choose2}\le x_{11}^2/2$ and the assumption $w \ge 1$. 
        Hence, \eq{e(Y,V(D))\ge (x_{12}+x_{21}+x_{22})|X_1|+(wx_{11}+x_{21})x_{11}-(w-1)(x_{11}^2/2+x_{11}x_{12}).}
        
        Next, we give an upper bound on $e(Y,V(D))$.
        Note that $Y\supset X_{11}\neq\emptyset$. Using $\binom{a}{2} < a^2/2$ for all $a>0$, we have \eq{e(Y,Y)\le{|Y|\choose2}<\frac{|Y|^2}{2}=\frac{(x_{11}+x_{12}+x_{21}+x_{22})^2}2.} The number of arcs from $Y$ to $V(D)\-Y$ can be estimated as follows.
        \begin{itemize}
            \item $e(X_{11}, V(D)\-Y)=0$, since all out-neighbors of vertices in $X_{11}$ lie in~$Y$. 
            \item $e(X_{21}, V(D)\-Y)\le x_{21}x_{32}$, since all out-neighbors of vertices in $X_{21}$ lie in $Y\cup X_{32}$. 
            \item $e(X_{12}, V(D)\-Y)\le x_{12}(x_{13}+x_{23})$, since all out-neighbors of vertices in~$X_{12}$ lie in $Y\cup X_{13}\cup X_{23}$. 
            \item $e(X_{22}, V(D)\-Y)\le x_{22}(x_{32}+x_{13}+x_{23}+x_{33})$, since all out-neighbors of vertices in $X_{22}$ lie in $Y\cup X_{32}\cup X_{13}\cup X_{23}\cup X_{33}$. 
        \end{itemize}
        Hence, \eq{e(Y,V(D))< &\ \frac{(x_{11}+x_{12}+x_{21}+x_{22})^2}2+x_{21}x_{32}+x_{12}(x_{13}+x_{23})\\&+x_{22}(x_{32}+x_{13}+x_{23}+x_{33}).}
        Together with the lower bound and rearranging some terms, we have $F> 0$.
\end{proof}
\subsection{Adjustments}
Now we claim that, given an assignment of the variables that solves the CSP in \Cref{feas}, we can tweak their values so that they satisfy a slightly different CSP. This will help us reduce the number of independent variables. We will need a narrower range for $w$, but it will still contain the value we eventually choose in \Cref{sec_opt}.
\begin{lemma}\label{adjust}
    For all $\mu\ge0$ and $w\in(1,1+\mu^2)$, if the CSP in \Cref{feas} is satisfiable, then the following CSP is also satisfiable. Here, the quantity $F$ is defined in the same way as in \Cref{feas}.
    \eqn{x_{11},x_{12},x_{13},x_{14},x_{21},x_{22},x_{23},x_{24},x_{32},x_{33},x_{34}\in \R\nonumber\\\text{subject to}\nonumber\\\mu(x_{11}+x_{12}+x_{13}+x_{14})= x_{21}+x_{22}+x_{23}+x_{24},\tag{1=}\label{i1e}\\
    \mu^2(x_{11}+x_{12}+x_{13}+x_{14}) = x_{32}+x_{33}+x_{34},\tag{2=}\label{i2e}\\
    \mu(x_{11}+x_{21}) = x_{12}+x_{22}+x_{32},\tag{3=}\label{i3e}\\
    x_{21}\ge x_{12}+x_{13}+x_{14},\tag{4}\\
    x_{14},x_{24},x_{34},x_{23}= 0,\tag{5=}\label{xi4e}\\x_{11}>0,\quad x_{13},x_{22},x_{32},x_{33},x_{12}+x_{13},x_{12}+x_{22}\ge 0,\tag{6}\\
    F> 0.\tag{7}\label{i5s}
    }
\end{lemma}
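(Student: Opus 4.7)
My plan is to start with a satisfying assignment of the CSP in \Cref{feas} and apply a sequence of explicit modifications that successively impose the additional requirements of the adjusted CSP, handling the easy zero-settings first and saving the simultaneous zeroing of $x_{23}$ and tightening of \eqref{i1}, \eqref{i2}, \eqref{i3} for last. Throughout, I will track how each move affects the three inequality slacks $s_1,s_2,s_3$ and, most importantly, the quadratic quantity $F$.

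The first (easy) stage handles $x_{14},x_{24},x_{34}$. Since $x_{24}$ and $x_{34}$ appear only on the RHS of \eqref{i1} and \eqref{i2} respectively and never in $F$, I can replace them with $0$, which only loosens those two inequalities and leaves every other constraint intact. For $x_{14}$ I transfer its mass into $x_{13}$: set $x_{13}'=x_{13}+x_{14}$ and $x_{14}'=0$. This preserves the LHS of \eqref{i1} and \eqref{i2} exactly, the inequality \eqref{i4} still holds as $x_{21}\ge x_{12}+x_{13}+x_{14}=x_{12}+x_{13}'$, and the sign conditions in \eqref{importantlinear} remain valid. A direct expansion, in which the disappearance of the term $-x_{14}(x_{12}+x_{21}+x_{22})$ is partially offset by the change $-x_{21}x_{14}$ in $x_{21}(x_{32}-x_{11}-x_{13})$, shows that $F$ changes by exactly $x_{14}(x_{12}+x_{22})$, which is $\ge 0$ by \eqref{importantlinear}.

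The hard stage is to make \eqref{i1}, \eqref{i2}, \eqref{i3} equalities while simultaneously setting $x_{23}=0$ and preserving $F>0$ and the sign conditions. My approach is to view \eqref{i1e}, \eqref{i2e}, \eqref{i3e} as a linear system in the variables $x_{22},x_{32},x_{33}$, solve it to express them in terms of the others, and then perturb $x_{11},x_{12},x_{13},x_{21}$ (consuming the slacks $s_1,s_2,s_3$ of the original inequalities) so that the derived $x_{22},x_{32},x_{33}$ satisfy the sign constraints in \eqref{importantlinear}. The main obstacle is maintaining $F>0$: zeroing $x_{23}$ alone reduces $F$ by $(x_{12}+x_{22})x_{23}$, which can be large, and the compensating perturbations must overcome this loss. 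The hypothesis $w\in(1,1+\mu^2)$ is calibrated exactly for this compensation. The lower bound $w>1$ gives a favorable cross-term $(w-1)x_{11}x_{12}$ in $F$ that grows when $x_{11}$ is decreased to tighten \eqref{i1}, while the upper bound $w<1+\mu^2$ keeps the quadratic penalty $-\tfrac{w}{2}x_{11}^2$ small enough that the slack available in \eqref{i2} (scaling with $\mu^2$) is sufficient for the required adjustments without driving $x_{11}$ down to zero. The ultimate verification that $F>0$ survives amounts to analyzing a quadratic expression in the adjustment parameters and showing it is positive throughout this range of $w$, which is the technical core I expect to be the main difficulty.
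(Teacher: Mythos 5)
Your first stage (moving the mass of $x_{14}$ into $x_{13}$ and zeroing $x_{24},x_{34}$, with the $F$-change computed as $x_{14}(x_{12}+x_{22})\ge 0$) is correct and is exactly the paper's first step. The problem is the ``hard stage'': what you have written there is a plan, not a proof. You propose to solve \eqref{i1e}--\eqref{i3e} for $x_{22},x_{32},x_{33}$ and then ``perturb $x_{11},x_{12},x_{13},x_{21}$'' so that the derived values satisfy the sign constraints and $F>0$ survives, and you explicitly defer the verification (``which is the technical core I expect to be the main difficulty''). That deferred verification \emph{is} the content of the lemma; without it you have not established satisfiability of the adjusted CSP.

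Moreover, the heuristic you give for why $w\in(1,1+\mu^2)$ is ``calibrated exactly'' does not match how the hypothesis is actually used, and part of it is not sound on its own terms. You claim that decreasing $x_{11}$ grows the favorable cross-term $(w-1)x_{11}x_{12}$; but $x_{12}$ is not required to be nonnegative (only $x_{12}+x_{13}$ and $x_{12}+x_{22}$ are), so the sign of that effect is not controlled, and in any case the paper's adjustment procedure never changes $x_{11}$ at all. In the paper, $w-1<\mu^2$ is used to show that the directional derivative of $F$ along the move ``increase $x_{13},x_{22}$ and decrease $x_{12},x_{23}$ at equal rate'' equals $-(w-1)x_{11}+x_{32}+x_{33}$, which is positive because $x_{32}+x_{33}\ge\mu^2(x_{11}+x_{12}+x_{13})$ after (2) has been tightened; and $w>1$ is used to show the directional derivative along ``increase $x_{12}$, decrease $x_{13}$'' is at least $(w-1)x_{11}>0$ when (4) is tight. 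Neither involves decreasing $x_{11}$. Finally, the paper's route to $x_{23}=0$ is the opposite of yours: it first \emph{increases} $x_{23}$ (together with $x_{33}$) to make (1) and (2) equalities -- a move under which $F$ only increases -- and then drives $x_{23}$ back to $0$ via the coupled move above, whose derivative can be signed. Your proposal of directly zeroing $x_{23}$ and then compensating has no such monotonicity structure, so there is no reason the compensation can succeed, and you have not shown that it does.
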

\begin{proof}
    Take an assignment of the variables $\{x_{ij}\}$ that satisfies the CSP in \Cref{feas}. We will perform a multi-step procedure to turn it into a solution of the desired CSP.
    \begin{enumerate}
        \item Increase the variable $x_{13}$ by $x_{14}$, and set $x_{14}$, $x_{24}$ and $x_{34}$ to 0. Note that doing so will preserve $x_{13}+x_{14}$ and increase $F$ by a nonnegative amount $(x_{12}+x_{22})x_{14}$. It follows that after this step, the variables will still satisfy the constraints \eqref{i1}--\eqref{i5}, while we can additionally assume that $x_{14}=x_{24}=x_{34}=0$. 
        \item Increase the values of $x_{23}$ and $x_{33}$, until the constraints \eqref{i1e} and \eqref{i2e} are satisfied. Since $F$ is an increasing function w.r.t $x_{23}$ and $x_{33}$, doing so will preserve the constraints \eqref{i3}--\eqref{i5}. Thus, after this step the variables will satisfy the constraints \eqref{i1e}, \eqref{i2e}, \eqref{i3}--\eqref{i5}, as well as $x_{14}=x_{24}=x_{34}=0$.
        \item Greedily increase the values of $x_{13}$ and $x_{22}$, and decrease the values of $x_{12}$ and $x_{23}$, all at the same rate. That is, we increase $x_{13}$ and $x_{22}$ and decrease $x_{12}$ and $x_{23}$ by some small value $\delta$ all at once, and continue doing so as much as the constraints allow. Note that doing this will preserve the constraints \eqref{i1e}, \eqref{i2e}, \eqref{i3}, \eqref{i4} and \eqref{importantlinear}. The amount that the quantity~$F$ changes by is a function of $\delta$. Because $F$ is a quadratic form in the variables, this function has the form $a\delta+b\delta^2$. Since the step size $\delta$ can be made arbitrarily small, to see that $F$ increases it suffices to show that~$a$, the total differential along the direction of change, stays strictly positive: \eq{a&=\frac{\partial F}{\partial x_{13}}+\frac{\partial F}{\partial x_{22}}-\frac{\partial F}{\partial x_{12}}-\frac{\partial F}{\partial x_{23}}\\&=-(w-1)x_{11}+x_{32}+x_{33}\\&>-\mu^2x_{11}+x_{32}+x_{33}\tag{$x_{11}>0,\ w-1<\mu^2$}\\&\ge-\mu^2(x_{11}+x_{12}+x_{13})+x_{32}+x_{33}\tag{$x_{12}+x_{13}\ge0$}\\&=0.\tag{\eqref{i2e} and $x_{14}=x_{34}=0$}} 
        Thus, the constraint \eqref{i5} is also preserved. With only the constraint \eqref{xi4} holding against us, we will stop precisely when $x_{23}=0$. Hence, after this step, the variables will satisfy the constraints \eqref{i1e}, \eqref{i2e}, \eqref{i3}, \eqref{i4}, \eqref{xi4e}, \eqref{importantlinear} and \eqref{i5}.
        \item The goal of this step is to satisfy the constraint \eqref{i3e} while preserving \eqref{i1e}, \eqref{i2e}, \eqref{i4}, \eqref{xi4e}, \eqref{importantlinear} and \eqref{i5}. 
        \begin{enumerate}
            \item Greedily decrease $x_{21}$ and increase $x_{22}$ at the same rate. Note that doing this will preserve the constraints \eqref{i1e}, \eqref{i2e}, \eqref{xi4e} and \eqref{importantlinear}, and only increase the value of $F$: \eq{\frac{\partial F}{\partial x_{22}}-\frac{\partial F}{\partial x_{21}}=x_{11}+x_{13}+x_{23}+x_{33}\ge x_{11}>0.} Thus, either we will satisfy \eqref{i3e} before violating \eqref{i4}, or we are forced to stop because the constraint \eqref{i4} is at equality: \eqn{x_{21}-x_{12}-x_{13}-x_{14}=0.\tag{4=}\label{i4e}} In the former case we terminate; otherwise we proceed to the next substep.
            \item Greedily increase $x_{12}$ and decrease $x_{13}$ at the same rate. Doing so will preserve the constraints \eqref{i1e}, \eqref{i2e}, \eqref{i4e} and \eqref{xi4e}, and only increase the value of $F$: \eq{\frac{\partial F}{\partial x_{12}}-\frac{\partial F}{\partial x_{13}}=(w-1)x_{11}+x_{21}-x_{12}-x_{14}+x_{23}\ge(w-1)x_{11}>0.} The first inequality above follows from \eqref{i4e}, \eqref{xi4e} and $x_{13}\ge0$. Thus, either we will satisfy \eqref{i3e} before violating \eqref{importantlinear}, or we are forced to stop because $x_{13}=0$. 
            Actually the latter would not occur earlier, since 
            when $x_{13}=0$, the strict inequality in constraint~\eqref{i3} is violated:
            \eq{\mu(x_{11}+x_{21})&=\mu(x_{11}+x_{12}+x_{13}+x_{14})\tag{by \eqref{i4e}}\\&=x_{21}+x_{22}+x_{23}+x_{24}\tag{by \eqref{i1e}}\\&=x_{12}+x_{13}+x_{22}\tag{by \eqref{i4e} and \eqref{xi4e}}\\&\le x_{12}+x_{22}+x_{32}.\tag{$x_{13}=0\le x_{22}$}}
            Hence, we will always satisfy \eqref{i3e} before violating \eqref{importantlinear}.
        \end{enumerate}
    \end{enumerate}
    Thus, after the adjustments, the variables will satisfy the constraints \eqref{i1e}, \eqref{i2e}, \eqref{i3e}, \eqref{i4}, \eqref{xi4e}, \eqref{importantlinear} and \eqref{i5}.
    Hence, the desired CSP is satisfiable.
\end{proof}
\section{Proof of Theorem \ref{legit}}\label{sec_opt}
We are ready to prove \Cref{legit}. Let $\gamma=0.715538\dots$ be the unique real root of $8x^5+4x^4-12x^3-7x^2+2x+4=0$ in the interval $[0, 1]$. By \Cref{feas} and \Cref{adjust}, \Cref{legit} immediately follows from the next result.
\begin{lemma}\label{infeas}
    Letting $\mu=\gamma$ and $w=\gamma^2+2\gamma^3\in(1,1+\gamma^2)$, the CSP in \Cref{adjust} is unsatisfiable.
\end{lemma}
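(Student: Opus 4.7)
The plan is to reduce the CSP to a bounded quadratic optimization problem in only a few free variables and then verify directly that the maximum of $F$ is nonpositive.

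First, the equalities $x_{14}=x_{23}=x_{24}=x_{34}=0$ from \eqref{xi4e} together with \eqref{i1e}, \eqref{i2e}, \eqref{i3e} allow me to eliminate three more variables. Normalizing $x_{11}+x_{12}+x_{13}=1$ by homogeneity (since $F$ is homogeneous of degree $2$ and all other constraints are homogeneous), I get $x_{21}=\mu-x_{22}$, $x_{32}=\mu x_{11}+\mu^{2}-x_{12}-(1+\mu)x_{22}$, and $x_{33}=\mu^{2}-x_{32}$. After substitution, $F$ becomes an explicit quadratic form $Q(x_{11},x_{12},x_{22})$ (with $x_{13}=1-x_{11}-x_{12}$) over a compact polytope $P\subset\R^{3}$ carved out by $x_{11}>0$, $x_{13},x_{22},x_{32},x_{33}\ge 0$, $x_{12}+x_{22}\ge 0$, $x_{12}+x_{13}\ge 0$, and the inequality (4) rewritten as $x_{11}+\mu-x_{22}\ge 1$.

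Second, since $Q$ is quadratic on a compact polytope, $\max_{P}Q$ is attained at either an interior stationary point or on a boundary face. A quick monotonicity check suggests that $x_{33}=0$ is active at the optimum: shifting mass from $x_{32}$ to $x_{33}$ preserves \eqref{i2e}, and changes $F$ by $\partial F/\partial x_{33}-\partial F/\partial x_{32}=x_{22}-(x_{21}+x_{22})=-x_{21}\le 0$ per unit. The remaining active constraints I would identify by KKT analysis; the tuned choice $w=\gamma^{2}+2\gamma^{3}$ should make the maximum occur on a specific low-dimensional face where $Q$ vanishes exactly when $\mu=\gamma$.

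Third, on this active face the problem should reduce to a concave univariate quadratic, whose maximum can be computed in closed form as a rational expression in $\mu$ and $w$. Imposing $\max Q=0$ and plugging in $w=\gamma^{2}+2\gamma^{3}$ should then simplify to precisely $8\mu^{5}+4\mu^{4}-12\mu^{3}-7\mu^{2}+2\mu+4=0$, with unique root $\gamma$ in $[0,1]$. Hence at $\mu=\gamma$, $\max_{P}F=0$, contradicting the strict inequality $F>0$ required by \eqref{i5s}.

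The main obstacle is the case analysis over active constraint sets: with eight linear inequalities cutting a polytope in $\R^{3}$, there are many candidate KKT scenarios, and the delicate part is showing that the tuned value $w=\gamma^{2}+2\gamma^{3}$ forces the extremal scenario to give $Q=0$ while all competing scenarios give $Q<0$. If a direct enumeration proves too tedious, a more elegant alternative is to exhibit an explicit Positivstellensatz-style algebraic identity writing $-F$ as a nonnegative combination of the original inequalities and their pairwise products, plus polynomial multiples of the equalities; such a certificate would collapse the case analysis but requires guessing the right multipliers, which is presumably exactly where the constants $w=\gamma^{2}+2\gamma^{3}$ and the quintic came from.
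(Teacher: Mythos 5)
Your proposal identifies the right shape of the problem (a quadratic feasibility question in a homogeneous system, solvable either by enumerating faces of a polytope or by an explicit algebraic certificate), and your final paragraph correctly guesses that the paper's actual argument is a Positivstellensatz-style identity: the paper introduces auxiliary variables $y_1,\dots,y_4$, rewrites $F$ and certain products of the linear constraints as quadratic forms in the $y_i$'s, and exhibits a positive combination $F + c_{14}P_1 + (c_{14}+c_{24})P_2 - (c_{23}+\rho c_{14})P_3 + (c_{23}-c_{34}-\gamma^{-1}\theta(c_{14}+c_{24}))P_4$ that collapses to a negative-semidefinite binary quadratic form $c_{11}y_1^2 + (c_{12}-c_{14})y_1y_2 + (c_{22}+c_{14})y_2^2$, contradicting $F>0$. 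The tuned values of $w$ and $\gamma$ are precisely what make three of the coefficients ($c_{13},c_{33},c_{44}$) vanish so that such a collapse is possible. You correctly surmise that this is ``presumably exactly where the constants came from,'' but you do not construct the certificate, so the proof is not carried out.

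The KKT route you sketch in more detail contains a concrete error. After eliminating $x_{21},x_{32},x_{33}$ via \eqref{i1e}--\eqref{i3e} and normalizing, those three quantities are determined affine functions of $(x_{11},x_{12},x_{22})$ --- they are no longer free. Your ``shift mass from $x_{32}$ to $x_{33}$'' argument therefore does not describe a feasible perturbation: decreasing $x_{32}$ while holding $x_{11},x_{12},x_{22}$ fixed violates \eqref{i3e}, and compensating for that breaks your single-partial-derivative computation $-x_{21}$ (which would pick up additional terms from whatever variable you move to restore \eqref{i3e}). You also implicitly use $x_{21}\ge 0$, which is true but derived (from \eqref{i4}, $x_{12}+x_{13}\ge 0$ and $x_{14}=0$), not listed among the sign constraints. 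More fundamentally, the KKT enumeration over all active-set combinations of your eight inequalities is exactly the part you concede you have not done; since the critical face is a degenerate one (the discriminant of the residual quadratic is strictly negative while one diagonal coefficient is tuned to zero), getting the enumeration right is genuinely delicate. As written, the proposal is an outline of two strategies, one of which contains a flawed monotonicity step, and neither of which reaches the conclusion.
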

\begin{proof}
    Note that $\gamma^2+2\gamma^3\approx1.2447$ and $\gamma^2\approx0.5119$, so we have $w\in(1,1+\gamma^2)$.
    
    Take an assignment of the variables $\{x_{ij}\}$ that satisfies the CSP in \Cref{adjust}.
    We set $\theta=2+2\gamma-4\gamma^3\approx1.9657$, and define four more variables:
    \eqn{y_1=\frac{x_{32}}{\gamma\theta}-\frac{x_{12}}\gamma,\ \ 
    y_2=\frac{x_{22}}\gamma-\frac{x_{33}}{\gamma^2},\ \ 
    y_3=\frac{x_{32}}{\gamma\theta},\ \ 
    y_4=\frac{x_{33}}{\gamma^2}.\label{ydef}}
    By the constraints \eqref{i1e}, \eqref{i2e}, \eqref{i3e} and \eqref{xi4e} and the definitions in \eqref{ydef}, all the $x_{ij}$ variables are linear combinations of $y_1$, $y_2$, $y_3$ and $y_4$:
    \eq{x_{32}&=\gamma\theta y_3,\ x_{33}=\gamma^2y_4,\ x_{12}=\gamma(y_3-y_1),\ x_{22}=\gamma(y_2+y_4),\tag{by \eqref{ydef}}\\
    x_{14}&=x_{24}=x_{34}=x_{23}=0,\tag{by \eqref{xi4e}}\\
    x_{21}&=\frac{x_{32}+x_{33}+x_{34}}\gamma-x_{22}-x_{23}-x_{24}=\theta y_3-\gamma y_2,\tag{by \eqref{i1e} and \eqref{i2e}}\\
    x_{11}&=(x_{11}+x_{21})+x_{22}+x_{23}+x_{24}-\frac{x_{32}+x_{33}+x_{34}}\gamma\tag{by \eqref{i1e} and \eqref{i2e}}\\
    &=\frac{x_{12}+x_{22}-x_{33}}\gamma+x_{22}+x_{23}\tag{by \eqref{i3e} and \eqref{xi4e}}\\
    &=(1+\gamma)y_2+y_3+y_4-y_1,\\
    x_{13}&=\frac{x_{21}+x_{22}+x_{23}+x_{24}}\gamma-x_{11}-x_{12}-x_{14}\tag{by \eqref{i1e}}\\
    &=(1+\gamma)(y_1-y_2)+(\gamma^{-1}\theta-\gamma-1) y_3.}
    
    By substitution, we have
    \eq{F&=\sum_{1\le i\le j\le 4}c_{ij}y_iy_j,
    }
    where the coefficients are
    {\allowdisplaybreaks[1] 
    \eq{
        c_{11}&=(w-1)\gamma-(w+\gamma^2)/2,\\*
        c_{12}&=\gamma^2+(1+\gamma)(w-(w-1)\gamma),\\*
        c_{13}&=w+\gamma^2-\gamma(2w-2+\theta),\\*
        c_{14}&=w-(w-1)\gamma,\\
        c_{22}&=-w(1+\gamma)^2/2,\\*
        c_{23}&=\theta-\gamma^2-(1+\gamma)(w-(w-1)\gamma),\\*
        c_{24}&=\gamma+\gamma^3-w(1+\gamma),\\
        c_{33}&=\gamma(w-1+\theta)+(\gamma-\gamma^{-1})\theta^2+(\theta^2-\gamma^2-w)/2,\\*
        c_{34}&=(\gamma+\gamma^2-1)\theta-w+(w-1)\gamma,\\
        c_{44}&=\gamma^3+(\gamma^2-w)/2.
    }}
    
    We note that $c_{13}=c_{33}=c_{44}=0$. The fact that $c_{13}=c_{44}=0$ follows from the definitions of $w$ and $\theta$. Substituting in those definitions, we also have \eq{c_{33}=\gamma^{-1}(\gamma-1)(2\gamma^2+2\gamma+1)(8\gamma^5+4\gamma^4-12\gamma^3-7\gamma^2+2\gamma+4)=0,} by the choice of $\gamma$. Now, set $\rho=1+\theta-\gamma^{-1}\theta \approx0.2186$  and consider four more quantities, which are nonnegative by the constraints \eqref{i4} and \eqref{importantlinear}: \eqon{P_1&=(y_2-y_1+\rho y_3)(y_2+y_4)=\gamma^{-1}(x_{21}-x_{12}-x_{13}-x_{14})x_{22}\ge0,\\
    P_2&=(\gamma^{-1}\theta y_3-y_2)y_4=\gamma^{-3}x_{21}x_{33}\ge0,\\
    P_3&=(y_2+y_4)y_3=\gamma^{-2}\theta^{-1}x_{22}x_{32}\ge0,\\
    P_4&=y_3y_4=\gamma^{-3}\theta^{-1}x_{32}x_{33}\ge0.\label{ppos}}
    An easier way to verify \eqref{ppos} is to start from the RHS and substitute the $x_{ij}$'s with linear combinations of $y_1$, $y_2$, $y_3$ and~$y_4$ shown earlier. Observe that the $P_i$'s and~$F$ have now become quadratic forms in the variables $y_1$, $y_2$, $y_3$ and~$y_4$, all with coefficient zero on $y_1y_3$, $y_3^2$ and~$y_4^2$. It turns out that one can linearly combine them to obtain a quadratic form in just the variables~$y_1$ and~$y_2$, which we demonstrate below. In fact, such linear combination is unique up to a constant factor, although our proof will not rely on this. {
    \eqn{&F+c_{14}P_1+(c_{14}+c_{24})P_2-(c_{23}+\rho c_{14})P_3+(c_{23}-c_{34}-\gamma^{-1}\theta(c_{14}+c_{24}))P_4\nonumber\\
    ={}&c_{11}y_1^2+(c_{12}-c_{14})y_1y_2+(c_{22}+c_{14})y_2^2.\label{false}}}
    
    We claim that this leads to a contradiction. Note that in \eqref{false}, all the $P_i$'s have positive coefficients: \eq{&c_{14}\approx 1.0696,\quad c_{14}+c_{24}\approx 0.0162,\quad {-(c_{23}+\rho c_{14})}\approx 0.1475,\\&c_{23}-c_{34}-\gamma^{-1}\theta(c_{14}+c_{24})\approx 0.1967,} so it follows from~\eqref{i5s} and~\eqref{ppos} that the LHS of~\eqref{false} must be strictly positive. However, the RHS of \eqref{false} is nonpositive, due to a negative leading coefficient and a negative discriminant: \eq{c_{11}\approx -0.7033,\quad (c_{12}-c_{14})^2-4c_{11}(c_{22}+c_{14})\approx -0.5120.}
    This is a contradiction. Hence, the CSP in \Cref{adjust} is unsatisfiable.
\end{proof}

\section{Concluding Remarks}
One can show that the constant $\gamma=0.715538\dots$ in \Cref{legit} is optimal under the constraints employed in our analysis. However, better bounds are observed when more nuanced constraints are added. Note that in the proof of \Cref{feas}, we selected a global out-degree minimizer, $u$, and then a weighted out-degree minimizer $v$ in $X_1=N^+(u)$. Continuing this procedure, one can further select a weighted out-degree minimizer $v_2$ in $X_{11}$, and use the more refined vertex sets \eq{X_{ijk}=\{y\in V(D):\dist(u,y)=i,\ \dist(v,y)=j,\ \dist(v_2,y)=k\}.} The analogous CSP then becomes unsatisfiable even when $\mu=0.74530$, with optimally chosen weighting factors. This follows from the rigorous bounds computed by the Gurobi optimization software. However, we believe that turning it into a fully self-contained, human-checkable proof would be excessively laborious.

It is natural to ask if further improvements to \Cref{legit}, or even a proof of \Cref{main_conj} in full generality, can be obtained in a similar fashion: one chooses a sequence of vertices that altogether satisfies some extremal properties (in our case, $u$ minimizes the out-degree, and $v$ minimizes the weighted out-degree among those in $N^+(u)$), and derives a contradiction provided that all the vertices in the sequence violate the $\mu$-Seymour condition. Alternatively, one might wonder if there is a theoretical limit to this approach. The median order proof \cite{havet2000median} of Dean's conjecture can also be loosely placed under this framework.

Our main result also has connections with the \textit{Caccetta--H\"aggkvist conjecture}, which we state below. We refer interested readers to the excellent survey \cite{sullivan2006summary}.
\begin{conjecture}[\cite{caccetta1978minimal}]\label{cacc}
    Every $n$-vertex oriented digraph with minimum out-degree at least~$r$ has a directed cycle with length at most $\ceil{\frac nr}$.
\end{conjecture}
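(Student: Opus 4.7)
The plan is to reduce the CSP to a small number of free parameters by exploiting the equalities, and then exhibit a Positivstellensatz-style certificate of infeasibility. The equalities \eqref{i1e}, \eqref{i2e}, \eqref{i3e} together with the four vanishings in \eqref{xi4e} cut the ambient dimension from $11$ down to $4$. I would choose four new coordinates $y_1,y_2,y_3,y_4$ adapted to the structure — for instance, taking $x_{32}$ and $x_{33}$ and two linear combinations involving $x_{22}$ and $x_{12}$ as a basis — so that $x_{11}$, $x_{13}$, and $x_{21}$ become explicit linear functions of the $y_i$ via the three equalities. This eliminates the linear constraints, leaving the single strict inequality $F>0$ together with a handful of linear nonnegativity constraints inherited from \eqref{i4} and \eqref{importantlinear}.

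Substituting back makes $F$ a homogeneous quadratic form $F=\sum_{i\le j} c_{ij}y_iy_j$ whose coefficients are polynomials in $\mu=\gamma$ and in $w$ (and in any auxiliary parameter used in the definition of the $y_i$). The algebraic heart of the argument is to tune $w$ and the coordinate choice so that the coefficients on the ``worst'' monomials vanish identically: I expect the choice $w=\gamma^2+2\gamma^3$ is engineered so that two or three of the $c_{ij}$ drop out for free, while one additional vanishing comes from $\gamma$ being the specified root of the quintic $8x^5+4x^4-12x^3-7x^2+2x+4$. The quintic's appearance should be exactly the condition that makes the final Positivstellensatz certificate feasible at the largest possible $\mu$.

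Next I would manufacture nonnegative quadratic quantities $P_1,\ldots,P_k$ as products of pairs of linear constraints that are $\ge 0$ at any feasible point (for example, $x_{21}-x_{12}-x_{13}-x_{14}$ against $x_{22}$, or $x_{32}$ against $x_{33}$). Each $P_i$ is a nonnegative quadratic form in the $y_i$. I would seek positive scalars $\alpha_1,\ldots,\alpha_k$ such that $F+\sum_i \alpha_i P_i$ is a quadratic form in just two of the $y_i$ — matching coefficients of every ``unwanted'' monomial is a small linear system in the $\alpha_i$. The delicate issue is that positivity of every $\alpha_i$ is required, so one must verify numerically at $\mu=\gamma$ that the unique (up to scale) eliminating combination happens to have all positive coefficients.

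Finally, the reduced two-variable form $Ay_1^2+By_1y_2+Cy_2^2$ must be verified to be nonpositive, which amounts to checking $A<0$ and $B^2-4AC\le 0$ at $\mu=\gamma$; combined with $F>0$ (from \eqref{i5s}) and $P_i\ge 0$, this yields the desired contradiction. The main obstacle is the cancellation engineering in the previous paragraph: choosing the right basis, the right list of product constraints $P_i$, and showing that the resulting certificate has strictly positive $\alpha_i$ and a nonpositive residual discriminant all at the same $\gamma$. Once the coordinates and products are identified, everything collapses into a finite list of explicit polynomial identities in $\gamma$, most of which are direct consequences of the defining quintic and the definition of $w$.
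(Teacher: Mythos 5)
The statement you were asked to prove is the Caccetta--H\"aggkvist conjecture, which is a long-standing open problem; the paper does not prove it, it merely quotes it (with a citation to Caccetta and H\"aggkvist) in the concluding remarks as background and context for \Cref{ourcacc}. Your proposal does not address this conjecture at all. What you have sketched is instead the argument for \Cref{infeas} (showing the CSP from \Cref{adjust} is infeasible at $\mu=\gamma$, $w=\gamma^2+2\gamma^3$): the reduction to four coordinates $y_1,\dots,y_4$, tuning $w$ and $\gamma$ so that several $c_{ij}$ vanish, forming the products $P_i$ from the linear constraints, taking a positive combination $F+\sum_i\alpha_iP_i$ that collapses to a two-variable form, and finishing with a negative-leading-coefficient, negative-discriminant check. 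That is indeed essentially the paper's proof of \Cref{infeas}, and as such the sketch is faithful --- but it has nothing to do with the existence of short directed cycles in arbitrary $n$-vertex oriented digraphs with minimum out-degree $r$. No amount of the CSP machinery you describe produces a cycle of length at most $\lceil n/r\rceil$, and indeed no one has a proof of this conjecture; the strongest known partial results (e.g.\ the $r=n/3$ case and the Behzad--Chartrand--Wall variant) remain open.

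If you intended to prove \Cref{infeas}, your plan is substantively correct and matches the paper, though you should make precise the specific basis $y_i$ (the paper uses $y_3=x_{32}/(\gamma\theta)$, $y_4=x_{33}/\gamma^2$, $y_1=y_3-x_{12}/\gamma$, $y_2=x_{22}/\gamma-y_4$ with $\theta=2+2\gamma-4\gamma^3$), and the exact list of products $P_1,\dots,P_4$ and their multipliers; the uniqueness of the eliminating combination (up to scaling) is also observed but not needed in the paper. But as an argument for the stated Caccetta--H\"aggkvist conjecture, the proposal is entirely off target.
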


The case $r=n/2$ is trivial, but the next case $r=n/3$ is still open, as well as a weakening of that case known as the \textit{Behzad--Chartrand--Wall conjecture}: 
\begin{conjecture}[\cite{behzad1970minimal}]\label{bcw}
    Every $n$-vertex oriented digraph with minimum out-degree and minimum in-degree at least $n/3$ contains a directed triangle.
\end{conjecture}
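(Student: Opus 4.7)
The plan is to treat this as a quadratic infeasibility problem and produce an explicit S-procedure certificate: find positive scalars $\alpha_i$ and quadratic expressions $P_i \ge 0$ arising from the sign constraints so that $F + \sum \alpha_i P_i$ is a manifestly nonpositive quadratic form. Then the hypothesis $F > 0$ will force the LHS to be strictly positive while the RHS is $\le 0$, the desired contradiction.

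To carry this out, I would first use the four vanishing variables from \eqref{xi4e} and the three equality constraints \eqref{i1e}--\eqref{i3e} to cut the eleven $x_{ij}$'s down to a four-dimensional subspace. Rather than eliminating specific $x_{ij}$'s, I would introduce a carefully tailored basis $(y_1, y_2, y_3, y_4)$ so that after substitution, $F$ becomes a quadratic form $\sum c_{ij} y_i y_j$ in which certain bad coefficients vanish. The natural $P_i$ candidates are products of pairs of nonneg linear functionals from \eqref{i4} and \eqref{importantlinear}, for instance $(x_{21} - x_{12} - x_{13}) x_{22}$, $x_{21} x_{33}$, $x_{22} x_{32}$ and $x_{32} x_{33}$. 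Inspecting which $y$-monomials these contribute, none of them produces $y_1 y_3$, $y_3^2$ or $y_4^2$. Consequently, the basis must be engineered so that these three coefficients already vanish in $F$ itself. Matching those three conditions is what pins down $w = \gamma^2 + 2\gamma^3$, a scaling constant $\theta = 2 + 2\gamma - 4\gamma^3$, and the quintic equation $8\gamma^5 + 4\gamma^4 - 12\gamma^3 - 7\gamma^2 + 2\gamma + 4 = 0$ defining $\gamma$.

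With the basis fixed, the multipliers $\alpha_1, \ldots, \alpha_4$ are determined uniquely by the four linear equations demanding the vanishing of the $y_1 y_4$, $y_2 y_3$, $y_2 y_4$ and $y_3 y_4$ monomials in $F + \sum \alpha_i P_i$. After these cancellations the combination collapses to a quadratic form in $y_1, y_2$ alone, and the argument finishes with two numerical sign checks: each $\alpha_i$ is positive (so the LHS is strictly positive by $F > 0$ and $P_i \ge 0$), and the residual quadratic form in $y_1, y_2$ has negative leading coefficient and negative discriminant (so the RHS is nonpositive). The main obstacle is the structural engineering of the change of variables — getting precisely the three coefficients $c_{13}, c_{33}, c_{44}$ to vanish simultaneously is what forces the specific quintic for $\gamma$, and without this alignment no certificate of the above form exists. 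Everything after that reduction is mechanical algebra and a few decimal comparisons.
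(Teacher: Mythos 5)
Your proposal does not actually address the stated conjecture. Conjecture~\ref{bcw} (Behzad--Chartrand--Wall) asserts that minimum out- and in-degree at least $n/3$ forces a directed triangle. The paper does \emph{not} prove this; it cites it as an open conjecture, and the result it does obtain (Corollary~\ref{ourcacc}, via Theorem~\ref{legit} and Proposition~5 of~\cite{csy}) requires the strictly stronger hypothesis that the minimum degrees be at least $n/(2+\gamma)\approx 0.3683\,n > n/3$. What you have sketched --- introducing the basis $(y_1,\dots,y_4)$, forcing $c_{13}=c_{33}=c_{44}=0$ to pin down $w$, $\theta$ and the quintic for $\gamma$, choosing the multipliers of $P_1,\dots,P_4$ to kill the cross terms $y_1y_4$, $y_2y_3$, $y_2y_4$, $y_3y_4$, and checking sign and discriminant of the residual form in $y_1,y_2$ --- is a faithful account of the paper's proof of Lemma~\ref{infeas}, the infeasibility of the CSP in Lemma~\ref{adjust}. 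That argument establishes Theorem~\ref{legit}, not Conjecture~\ref{bcw}.

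The gap is not repairable by polishing the certificate: to reach the $n/3$ threshold via Proposition~5 of~\cite{csy} one needs $\gamma = 1$, i.e.\ the full second neighborhood conjecture, and the paper explicitly notes that $\gamma \approx 0.7155$ is optimal for the constraints used. No S-procedure certificate built from the CSP in Lemma~\ref{adjust}, with the vertex choices in Lemma~\ref{feas}, can get $\mu$ anywhere near~$1$, so this line of reasoning --- while it is exactly the paper's route to its main theorem --- cannot prove Conjecture~\ref{bcw}. If you wanted to prove the strongest triangle statement that \emph{does} follow from the paper, you should target Corollary~\ref{ourcacc}, for which you would additionally need to cite and apply the reduction in Proposition~5 of~\cite{csy}, a step entirely absent from your proposal.
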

The extremal examples for the second neighborhood conjecture we mentioned also show that \Cref{bcw}, if true, is sharp. Note that \Cref{main_conj} would imply \Cref{bcw}. In fact, by Proposition~5 of~\cite{csy}, our bound in \Cref{legit} also yields the following bound on the Behzad--Chartrand--Wall conjecture:
\begin{corollary}\label{ourcacc}
    Every $n$-vertex oriented digraph with minimum out-degree and minimum in-degree at least $n/(2+\gamma)$ contains a directed triangle, where $\gamma=0.715538\dots$ is the same as in \Cref{legit}.
\end{corollary}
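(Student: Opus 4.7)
The plan is a direct neighborhood-counting argument by contradiction. Suppose $D$ is an $n$-vertex oriented digraph with $\delta^+(D), \delta^-(D) \ge n/(2+\gamma)$ but with no directed triangle. By \Cref{legit}, there is a $\gamma$-Seymour vertex $u$, so $d^{++}(u) \ge \gamma d^+(u) \ge \gamma n/(2+\gamma)$.

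Next, I would verify that the four sets $\{u\}$, $N^-(u)$, $N^+(u)$, and $N^{++}(u)$ are pairwise disjoint subsets of $V(D)$. Most of the disjointness is automatic: the absence of loops and digons gives $u \notin N^-(u) \cup N^+(u)$ and $N^-(u) \cap N^+(u) = \emptyset$; the positive-distance convention of the paper makes $N^+(u) \cap N^{++}(u) = \emptyset$ by definition, and $u \notin N^{++}(u)$ because $\dist(u,u) \ge 3$. The single place the triangle-free hypothesis enters is in ruling out $N^-(u) \cap N^{++}(u)$: any $v$ in this intersection would satisfy $v \to u$ together with $u \to w \to v$ for some $w$, producing a directed triangle $u \to w \to v \to u$.

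With all four sets disjoint, summing the cardinalities yields
\[
n \;\ge\; 1 + |N^-(u)| + |N^+(u)| + |N^{++}(u)| \;\ge\; 1 + \frac{n}{2+\gamma} + \frac{n}{2+\gamma} + \gamma \cdot \frac{n}{2+\gamma} \;=\; 1 + n,
\]
which is the desired contradiction.

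There is essentially no obstacle once \Cref{legit} is in hand; the proof is a streamlined version of the classical reduction showing that the full second neighborhood conjecture (i.e.\ the $\mu=1$ case) would imply \Cref{bcw} with threshold $n/3$, here with the ratio $\gamma$ replacing the unproven $1$. The threshold $n/(2+\gamma)$ is tuned precisely so that the four disjoint pieces exhaust $V(D)$ up to a single extra vertex, making the contradiction as tight as possible.
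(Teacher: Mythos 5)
Your argument is correct. The paper itself does not prove this corollary directly but rather cites Proposition 5 of Chen--Shen--Yuster~\cite{csy}, which is precisely the general statement that the existence of a $\mu$-Seymour vertex in every oriented digraph forces a directed triangle once $\delta^+,\delta^-\ge n/(2+\mu)$; your self-contained proof is the standard argument behind that proposition, specialized to $\mu=\gamma$, and the disjointness bookkeeping (including using the positive-distance convention to get $u\notin N^{++}(u)$ and the triangle-free hypothesis to get $N^-(u)\cap N^{++}(u)=\emptyset$) is handled correctly.
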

The resulting constant factor $1/(2+\gamma)$ is approximately $0.3683$, which is not as good as the current record $0.3465$ shown by Hladk\`y et al. \cite{hladky2017counting}. However, it might be of independent interest to apply the ideas and tools we developed in this paper to study \Cref{cacc} or \Cref{bcw} directly.
\bibliographystyle{plain}
\bibliography{bib.bib}
\end{document}